\numberwithin{equation}{section}
\newtheorem{theorem}{Theorem}[section]
\newtheorem{lemma}[theorem]{Lemma}
\newtheorem{remark}[theorem]{Remark}
\theoremstyle{definition}
\renewcommand{\epsilon}{\eps}
\renewcommand{\i}{{\rm i}}
\newcommand{\C}{{\mathbb C}}
\newcommand{\N}{{\mathbb N}}
\newcommand{\R}{{\mathbb R}}
\newcommand{\mS}{{\mathbb S}}
\newcommand{\eps}{\varepsilon}
\newcommand{\1}{{\bf 1}}
\newcommand{\pnorm}[2][]{\if #1'' \left|#2\right|_p \else \left|#2\right|_{#1} \fi}
\newcommand{\loc}{{\rm loc}}
\renewcommand{\theta}{\vartheta}
\DeclareMathOperator{\supp}{supp}
\title{On anisotropic Sobolev spaces}
\author[H.-M. Nguyen]{Hoai-Minh Nguyen}
\author[M.\ Squassina]{Marco Squassina}
\address[H.-M. Nguyen]{Department of Mathematics \newline\indent
	EPFL SB CAMA \newline\indent
	Station 8 CH-1015 Lausanne, Switzerland}
\email{hoai-minh.nguyen@epfl.ch}
\address[M.\ Squassina]{Dipartimento di Matematica e Fisica \newline\indent
	Universit\`a Cattolica del Sacro Cuore \newline\indent
	Via dei Musei 41, I-25121 Brescia, Italy}
\email{marco.squassina@unicatt.it}
\thanks{The second author is member of {\em Gruppo Nazionale per l'Analisi Ma\-te\-ma\-ti\-ca, la Probabilit\`a e le loro Applicazioni} (GNAMPA) of the {\em Istituto Nazionale di Alta Matematica} (INdAM). Part  of  this  paper  was  written  during  the  
meeting  ``Nonlinear Days in Turin'' organized in Turin, Italy, in September 2017.  The hosting institution is acknowledged.}
\subjclass[2010]{46E35, 28D20, 82B10, 49A50}
\keywords{Nonlocal functionals, characterization of anisotropic Sobolev spaces.}
\begin{document}

\begin{abstract}
We investigate two types of   characterizations for  anisotropic Sobolev and BV spaces. In particular, we establish anisotropic versions of the Bourgain-Brezis-Mironescu 
formula, including the magnetic case both for Sobolev and BV functions.
\end{abstract}
\maketitle


\section{Introduction and results}

\subsection{Overview}
Around 2001, J.\ Bourgain, H.\ Brezis and P.\ Mironescu, investigated (cf.\ \cite{bourg,bourg2,bre})  the asymptotic behaviour of a class on nonlocal functionals on a domain $\Omega\subset\R^N$, including those related to the norms 
of the fractional Sobolev spaces $W^{s,p}(\R^N)$ as $s\nearrow 1$. In 
the case $\Omega=\R^N$,  their later result can be formulated as follows: if $p>1$ and $u\in W^{1,p}(\R^N)$, then 
\begin{equation}
\label{basic}
\lim_{s \nearrow 1} (1-s)\iint_{\R^{2N}} \frac{|u (x) -  u(y)|^p}{|x - y|^{N+ps}} \, dx \, dy = K_{p,N} \int_{\R^N} |\nabla u|^p \, dx,
\end{equation}
where
\begin{equation}
\label{const}
K_{p,N}=\frac{1}{p}\int_{{\mathbb S}^{N-1}} |{\boldsymbol \omega}\cdot x|^pd\sigma,
\end{equation}
being ${\boldsymbol \omega}\in {\mathbb S}^{N-1}$ any fixed vector. Here and in what follows, for a vector $x \in \R^N$, 
$|x|$ denotes its Euclidean norm. 

Given a convex, symmetric subset $K\subset \R^N$ containing the origin,  
let $\|\cdot\|_K$ be the norm in $\R^N$ which admits as unit ball the set $K$, i.e., 
\begin{equation}
\label{Knorm}
\| x\|_K :  = \inf \big\{\lambda > 0: x/ \lambda  \in K \big\}. 
\end{equation}
It is rather natural to wonder what happens to formula \eqref{basic} by replacing in the singular 
kernel $|x - y|$ with its anisotropic version $\|x - y\|_K$. In 2014, M. Ludwig \cite{ludwig,ludwig-BV} proved that, for a compactly supported function $u\in W^{1,p}(\R^N)$, there holds
\begin{equation}
\label{ludw}
\lim_{s \nearrow 1} (1-s)\iint_{\R^{2N}} \frac{|u (x) -  u(y)|^p}{\|x - y\|_K^{N+ps}} \, dx \, dy = \int_{\R^N} \|\nabla u\|^p_{Z^*_pK}\, dx,
\end{equation}
Here $\| \cdot  \|_{Z^*_pK}$ is the norm associated with the convex set $Z^*_pK$ which is the polar $L_p$ moment body of $K$ (see \eqref{Lpmoment} and \eqref{Zpk}); such quantitities  were involved in recent important applications within convex geometry and probability theory, see e.g.\ \cite{fleury,Haber,ludwig-0} and the references therein. Thus, changing the norm in the nonlocal functional produces anisotropic effects in the singular limit. 
The norm 
$$
v\mapsto\|v\|_{Z^*_pK},
$$ 
can be explicitly written and, in the particular case $\|\cdot\|_{K}=|\cdot|$ (Euclidean case), then $K=B_1$, the unit ball of $\R^N$,  and the results are consistent with classical formulas, since $\|\cdot\|_{Z^*_pB}=\sqrt[p]{K_{p,N}}|\cdot|$.

M. Ludwig's proof of formula \eqref{ludw} relies on a reduction argument involving the one dimensional version of the Bourgain-Brezis-Mironescu formula in the Euclidean setting jointly with the  {\em Blaschke-Petkantschin} geometric integration formula  (cf.\ \cite[Theorem 7.2.7]{book}), namely
\begin{equation*}
\int_{\R^N}\int_{\R^N} f(x,y) dxdy=\int_{{\rm Aff(N,1)}}\int_{L}\int_{L} f|_{L\times L}(x,y)
|x-y|^{N-1}d{\mathscr H}^1(x)d{\mathscr H}^1(y)dL,
\end{equation*}
where ${\mathscr H}^1$ is the one dimensional Hausdorff measure on $\R^N$, ${\rm Aff}(N,1)$
is the affine Grassmannian of lines in $\R^N$ and $dL$ denotes the integration with respect to a Haar measure on ${\rm Aff}(N,1)$. 

Around 2006, motivated by an estimate for the topological degree raising in the framework of Ginzburg-Landau equations \cite{BBNg1},  a new alternative characterization of the Sobolev spaces was introduced (cf. \cite{BourNg, nguyen06,NgSob2}). 
As a result, for every $u\in W^{1,p}(\R^N)$ with $p>1$, there holds
\begin{equation}
\label{nonconvex}
\lim_{\delta\to 0}\iint_{\{|u(y)-u(x)|>\delta\}}\frac{\delta^p}{|x-y|^{N+p}}dx dy= 
K_{p,N}\int_{\R^N}|\nabla u|^pdx,
\end{equation}
where $K_{p,N}$ is the constant appearing in \eqref{const}.
It is thus natural to wonder if, replacing $|x-y|$ in the singular kernel with the corresponding 
anisotropic version $\|x-y\|_K$, produces in the limit the same result as in 
formula \eqref{ludw}. 

The previous two characterizations were also considered for $p=1$. $BV$ functions are involved in this case, see \cite{bourg, Davila, BourNg, nguyen06}. Other properties related to these characterizations can be found in  \cite{BHN3, BHN2, BHN, ponce, NgGamma, Ng11, magn-case}. 
Both the  characterizations (for the Euclidean norm) were recently extended to the case of {\em magnetic Sobolev} and  {\em $BV$ spaces}  \cite{acv-p,magn-case,squ-volz}. More general nonlocal functionals have been investigated in  \cite{BHN3,BHN2,BHN-0,BHN,BrezisNguyen}.

\subsection{Anisotropic spaces} In this section, we introduce anisotropic magnetic Sobolev and BV spaces. For this end, complex numbers and notations are involved. 
Let $p\geq 1$ and consider the complex space 
$(\C^N, |\cdot|_{p})$ endowed with
\begin{equation*}
|z|_p:=\left(|(\Re z_1,\ldots, \Re z_N)|^p+|(\Im z_1,\ldots, \Im z_N)|^p\right)^{1/p},
\end{equation*}
where $\Re a$ and  $\Im a$ denote the real and imaginary parts of $a\in\mathbb{C}$. 
Recall that  $| x |$ is the {\em Euclidean} norm of $x \in \R^N$.  Notice that $|z|_p=|z|$ for $z \in \R^N$. Let $\|\cdot\|_K$ be the norm
as in \eqref{Knorm}. We set 
\begin{equation}\label{Lpmoment}
\|v\|_{Z^*_pK}:=\left(\frac{N+p}{p}\int_K |v\cdot x|^p_p dx\right)^{1/p}, \quad  \mbox{ for } v\in\C^N.
\end{equation}
The set $Z^*_pK\subset \C^N$ which is defined as 
\begin{equation}\label{Zpk}
Z^*_pK:=\big\{v\in\C^N:\, \|v\|_{Z^*_pK}\leq 1\big\}
\end{equation}
is called the (complex) polar $L_p$-moment body of $K$. 
Denote $L^p(\R^N,\C)$ 
the Lebesgue space of functions $u:\R^N\to\C$ such that
$$
\|u\|_{L^p(\R^N)}:=\left(\int_{\R^N} |u|_{p}^pdx\right)^{1/p}<\infty. 
$$
For a locally bounded function $A:\R^N\to\R^N$ (magnetic potential), set 
$$
[u]_{W^{1,p}_{A,K}(\R^N)}:=\left(\int_{\R^N}\|\nabla u-\i A(x)u\|^p_{Z^*_pK}dx\right)^{1/p}. 
$$
Let $W^{1,p}_{A,K}(\R^N)$ be the space of $u\in L^p(\R^N,\C)$ such that  $[u]_{W^{1,p}_{A,K}(\R^N)}<\infty$ with the norm
$$
\|u\|_{W^{1,p}_{A,K}(\R^N)}:=\left(\|u\|_{L^p(\R^N)}^p+[u]_{W^{1,p}_{A,K}
	(\R^N)}^p\right)^{1/p}.
$$
Denote $\| \cdot  \|_{Z_1^*K^*}$ the dual norm of the norm $\| \cdot  \|_{Z_1^*K}$ on $\R^N$, namely for $v\in\R^N$
$$
\|v\|_{Z_1^*K^*}:=\sup\big\{\langle v,w\rangle_{\R^N}: w\in\R^N,\,\, \|w\|_{Z_1^*K}\leq 1\big\},\,\,\, \mbox{ with }
\langle v,w\rangle_{\R^N}=\sum_{j=1}^N v_jw_j, \;  \forall  v, w \in \R^N.
$$  
For a complex function $ u \in L^1_{\loc}(\R^N)$, as in \cite{acv-p},  we define 
$$
|Du |_{A, K} := C_{1, A, K, u} + C_{2, A, K, u},
$$ 
where 
\begin{align*}
& C_{1, A, K, u} := \sup \left\{ \int_{\R^N} \Re u \mbox{div} \varphi - A \cdot \varphi \Im u \, dx,\,\, \varphi \in C^1_{c}(\R^N, \R^N) \mbox{ with }  \|\varphi (x)\|_{Z_1^*K^*}  \le 1 \mbox{ in } \R^N \right\}, \\
& C_{2, A, K, u} := \sup \left\{ \int_{\R^N} \Im u \mbox{div} \varphi + A \cdot \varphi \R u \, dx,\,\, \varphi \in C^1_{c}(\R^N, \R^N) \mbox{ with }  \|\varphi (x)\|_{Z_1^*K^*}  \le 1 \mbox{ in } \R^N \right\}.
\end{align*}
We say that $u \in BV_{A,K}(\R^N)$ if $u\in L^1(\R^N)$ and $|Du|_{A,K}<\infty$
and in this case we formally set 
\begin{equation}
\label{agreem}
|Du |_{A, K}=\int_{\R^N} \|\nabla u - \i A(x) u\|_{Z_1^*K} \, dx.
\end{equation}
The space $BV_{A,K}(\R^N)$ is a Banach space \cite{acv-p} equipped the norm
$$
\|u \|_{A, K}=\|u\|_{L^1(\R^N)}+|Du |_{A, K},\quad u\in BV_{A,K}(\R^N).
$$

\subsection{Main results}
The goal of this paper is to extend the two characterizations mentioned above to anisotropic magnetic 
Sobolev and BV spaces. Our approach is in the spirit of the works on the Euclidean spaces. 
In particular, we make no use of the Blaschke-Petkantschin geometric integration formula as in the work of M. Ludwig. 

	Let $A: \R^N \to \R^N$ be measurable and locally bounded. Set 
$$
\Psi_u(x,y):=e^{\i (x-y)\cdot A\left(\frac{x+y}{2}\right)} u (y),\quad x,y\in\R^{N}.
$$ 	
Motivated by the study of the interaction of particles in the presence of a magnetic field, see e.g., \cite{AHS, I10} and references therein, Ichinose \cite{I10} considered the non-local functional 
\begin{equation*}
	H^s_A(\R^N)\ni u\mapsto \iint_{\R^{2N}}\frac{|u(x)-e^{\i (x-y)\cdot A\left(\frac{x+y}{2}\right)}u(y)|^2}{|x-y|^{N+2s}}dx \, dy,
\end{equation*}
for $s\in (0,1)$, and established that its gradient is  the fractional Laplacian associated with the magnetic field $A$ via a probabilistic argument. 
As in the spirit of the previous results, the quantity $\Psi_u$ has been recently involved in the characterization of magnetic Sobolev and BV functions. In this paper, we establish  the following  anisotropic magnetic version of \eqref{nonconvex}. 

\begin{theorem}
	\label{thm2-stat}
	Let $p>1$ and let $A: \R^N \to \R^N$ be Lipschitz. Then, for every $u\in W^{1,p}_{A,K}(\R^N)$,
	\begin{equation*}
	\lim_{\delta\searrow 0} \int_{\R^N}\int_{\R^N} 
	\frac{\delta^p}{\|x-y\|_K^{N+p}}\1_{\{|\Psi_u(x,y)-\Psi_u(x,x)|_p>\delta\}}
	\, dxdy =\int_{\R^N} \|\nabla u-\i A(x)u\|^p_{Z^*_pK}dx,
	\end{equation*}
\end{theorem}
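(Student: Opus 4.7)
The plan is to follow the classical two-step scheme for Nguyen-type characterizations: establish the identity first for smooth compactly supported $u$ by a Taylor-plus-polar-coordinates computation, and then extend to general $u\in W^{1,p}_{A,K}(\R^N)$ by density, using a $\delta$-uniform upper bound on the left-hand side in terms of $[u]_{W^{1,p}_{A,K}(\R^N)}$ and $\|u\|_{L^p(\R^N)}$.

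\medskip

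\textbf{Smooth case.} For $u\in C^2_c(\R^N,\C)$ and $A$ Lipschitz, a Taylor expansion in $y$ around $x$ gives
\begin{equation*}
\Psi_u(x,y) - u(x) = (y-x)\cdot v(x) + R(x,y), \qquad v(x):=\nabla u(x)-\iu A(x)u(x)\in\C^N,
\end{equation*}
with $|R(x,y)|_p\le C(u,A)|x-y|^2$ and compactly supported in $x$ on a bounded neighbourhood of $\supp u$. Changing variables $h:=y-x$ and using polar coordinates $h=r\omega$, $r>0$, $\omega\in\mS^{N-1}$, $dh=r^{N-1}\,dr\,d\omega$, together with $\|r\omega\|_K=r\|\omega\|_K$, the portion of the $y$-integral over $\{|h|\le\eta\}$ reduces to
\begin{equation*}
\int_{\mS^{N-1}}\frac{\delta^p}{\|\omega\|_K^{N+p}}\int_0^\eta r^{-p-1}\,\1_{\{|r\omega\cdot v(x)+O(r^2)|_p>\delta\}}\,dr\,d\omega.
\end{equation*}
Performing the $r$-integration explicitly and sending $\delta\to 0$ (with $\eta$ fixed, then $\eta\to 0$) the inner integral converges to $\tfrac{1}{p}|\omega\cdot v(x)|_p^p/\|\omega\|_K^{N+p}$. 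Writing the definition \eqref{Lpmoment} in polar form on $K$ yields the key algebraic identity
\begin{equation*}
\|v\|_{Z^*_pK}^p=\frac{1}{p}\int_{\mS^{N-1}}\frac{|v\cdot\omega|_p^p}{\|\omega\|_K^{N+p}}\,d\omega,\qquad v\in\C^N,
\end{equation*}
so that for each $x$ the pointwise limit is exactly $\|v(x)\|_{Z^*_pK}^p$. The contribution from $\{|h|>\eta\}$ is $O(\delta^p)$ as $\delta\to 0$, since the indicator is bounded by $1$ and the kernel is integrable there on the compact set where $u(x)$ or $u(y)$ can matter. Dominated convergence in $x$, justified by Step~2 below applied to $u$ itself, concludes the smooth case.

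\medskip

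\textbf{Uniform bound and density.} The goal is
\begin{equation*}
\int_{\R^N}\int_{\R^N}\frac{\delta^p}{\|x-y\|_K^{N+p}}\1_{\{|\Psi_u(x,y)-u(x)|_p>\delta\}}\,dx\,dy\le C\bigl([u]_{W^{1,p}_{A,K}(\R^N)}^p+\|u\|_{L^p(\R^N)}^p\bigr),
\end{equation*}
uniformly in $\delta>0$. The splitting
\begin{equation*}
\Psi_u(x,y)-u(x)=\bigl(u(y)-u(x)\bigr)+\bigl(e^{\iu(x-y)\cdot A((x+y)/2)}-1\bigr)u(y),
\end{equation*}
combined with $|e^{\iu t}-1|\le|t|$ and the local boundedness of $A$, reduces the estimate to a nonmagnetic Nguyen-type inequality, which is then proved by a directional chord decomposition: for each $\omega\in\mS^{N-1}$ apply a one-dimensional Nguyen-type inequality to $t\mapsto u(x+t\omega)$, and integrate against the anisotropic weight $\|\omega\|_K^{-N-p}$ on $\mS^{N-1}$. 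Combined with the density of $C^\infty_c(\R^N,\C)$ in $W^{1,p}_{A,K}(\R^N)$ (see \cite{acv-p,magn-case,squ-volz}), the uniform bound upgrades the smooth-case identity to the whole space via a standard $\liminf$/$\limsup$ interpolation argument.

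\medskip

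\textbf{Main obstacle.} The delicate step is the uniform upper bound in Step~2. The non-convex, discontinuous indicator $\1_{\{|\cdot|>\delta\}}$ rules out the convexity/subadditivity tools of the Bourgain--Brezis--Mironescu proof, and the anisotropic kernel $\|x-y\|_K^{-N-p}$ lacks radial symmetry, forcing the chord/layer-cake analysis to be carried out with the anisotropic weight $\|\omega\|_K^{-N-p}$ throughout. The magnetic phase, by contrast, is a lower-order perturbation tailor-made for the Lipschitz hypothesis on $A$ and is handled by the elementary bound $|e^{\iu t}-1|\le|t|$.
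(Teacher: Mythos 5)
Your overall scheme coincides with the paper's: reduce to $u\in C^1_c$ via a $\delta$-uniform upper bound plus density and the interpolation inequality \eqref{Limit formula-2}, compute the limit for smooth $u$ by the change of variables and the pointwise limit \eqref{limit}, and conclude with the polar identity \eqref{id2} for $\|\cdot\|_{Z^*_pK}$. The smooth-case computation and the final identity are fine and essentially identical to the paper's.

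The genuine gap is in your uniform bound. You split $\Psi_u(x,y)-u(x)=\bigl(u(y)-u(x)\bigr)+\bigl(e^{\i(x-y)\cdot A((x+y)/2)}-1\bigr)u(y)$ and control the second term by $|e^{\i t}-1|\le|t|$ together with the ``local boundedness of $A$''. This destroys the gauge structure: the error term it produces is of size $|x-y|\,|A(\tfrac{x+y}{2})|\,|u(y)|$, and after the layer-cake/chord estimate its contribution to $I_\delta^K$ is of order $\int |A|^p|u|^p\,dx$ (or local sup norms of $A$ on the support of $u$). For $A$ merely Lipschitz --- the hypothesis of the theorem, which includes the physically central unbounded case $A(x)=Bx$ --- and for $u\in W^{1,p}_{A,K}(\R^N)$ without compact support, the quantity $\|Au\|_{L^p}$ is in general \emph{not} controlled by $[u]_{W^{1,p}_{A,K}}+\|u\|_{L^p}$, so your reduction to a nonmagnetic Nguyen inequality fails exactly where it is needed, namely on the differences $u-v$ appearing in the density argument. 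The bound the paper invokes, \cite[Theorem 3.1]{magn-case}, involves only $\|\nabla A\|_{L^\infty}$ and $\|u\|_{L^p}$ precisely because its proof exploits the midpoint evaluation $A(\tfrac{x+y}{2})$: comparing the phase with the covariant increment along the segment from $x$ to $y$ leaves an error of order $\|\nabla A\|_{L^\infty}|x-y|^2|u|$, not $|A|\,|x-y|\,|u|$. To repair your Step~2 you must keep this cancellation (or simply quote the cited theorem, noting that $\|\cdot\|_K$ is equivalent to the Euclidean norm so only the constant changes); as written, the step would only prove the theorem for bounded $A$ or compactly supported $u$. A minor additional point: in the smooth case, the domination in $x$ should be justified by an explicit dominating function for the inner integral (available for $C^1_c$ data, as in \cite[Lemma 3.3]{magn-case}), rather than by appealing to the global bound of Step~2, which does not furnish a pointwise-in-$x$ majorant.
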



If $p = 1$, one can show  (see Remark~\ref{rem-W11}) that, for $u \in W^{1, 1}_{A, K}(\R^N)$, 
\begin{equation*}
	\lim_{\delta\searrow 0} \int_{\R^N}\int_{\R^N} 
	\frac{\delta}{\|x-y\|_K^{N+1}}\1_{\{|\Psi_u(x,y)-\Psi_u(x,x)|_p>\delta\}}
	\, dxdy  \ge \int_{\R^N} \|\nabla u-\i A(x)u\|_{Z^*_1K}dx.
\end{equation*}
Nevertheless, such an inequality does not hold in general for $u \in BV_{A}(\R^N)$  even in  the case where $A \equiv 0$ and $K $ is  the unit ball (see \cite[Pathology 3]{BHN}). In the case $A = 0$, one has
\begin{equation*}
	\lim_{\delta\searrow 0} \int_{\R^N}\int_{\R^N} 
	\frac{\delta}{\|x-y\|_K^{N+1}}\1_{|u(y)- u(x)|>\delta\}}
	\, dxdy  \ge C  \int_{\R^N} \|\nabla u\|_{Z^*_1K}dx,
\end{equation*}
for some positive constant $0 <  C < K_{N, 1}$.  This inequality is a direct consequence of the corresponding result in the Euclidean setting in \cite{BourNg}. 

\medskip 
We next discuss the BBM formula for the anisotropic magnetic setting. 
Let $(\rho_n)$ be a sequence of non-negative radial mollifiers such that 
\begin{equation}\label{mollifiers-stat}
\lim_{n \to + \infty} \int_{\delta}^\infty \rho_n(r) r^{N-1 -p} \, d r = 0, \quad \mbox{ for all } \delta > 0 \quad \mbox{ and } \quad \int_{0}^1\rho_n(r) r^{N-1} \, d r = 1. 
\end{equation}
Here is the anisotropic magnetic BBM formula. 
\begin{theorem} 
	\label{thm3-stat} 
	Let  $p \geq 1$, let $A:\R^N\to\R^N$ be Lipschitz,  and let $\{\rho_n\}_{n\in\N}$ be a sequence of nonnegative radial mollifiers satisfying \eqref{mollifiers-stat}. 
	Then, for  $u \in W^{1,p}_{A,K}(\R^N)$, 
	\begin{equation*}
	\lim_{n \to + \infty} \iint_{\R^{2N}} \frac{|\Psi_u (x, y) -  \Psi_u(x, x)|^p}{\|x - y\|_K^p} \rho_n ( \|x - y\|_K) \, dx \, dy =  p\int_{\R^N} \|\nabla u - \i A(x) u\|_{Z_p^*K}^p \, dx. 
	\end{equation*}
	Furthermore, if $p=1$ and $u\in BV_{A,K}(\R^N)$ the formula holds with the 
agreement \eqref{agreem}.
\end{theorem}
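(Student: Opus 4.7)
The plan is to establish the formula first for smooth compactly supported functions via Taylor expansion and polar integration, and then extend by density to $W^{1,p}_{A,K}(\R^N)$ (and to $BV_{A,K}(\R^N)$ when $p=1$). The anisotropy of $K$ enters through a polar identity relating angular integrals on $S^{N-1}$ weighted by $\|\theta\|_K^{-(N+p)}$ to the $Z_p^*K$-norm.

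For $u \in C_c^2(\R^N, \C)$, I set $V(x) := \nabla u(x) - \i A(x) u(x)$ and write $h = y - x$. Using that $A$ is Lipschitz, a second-order Taylor expansion of $u(x+h)$ together with $e^{-\i h \cdot A(x+h/2)} = 1 - \i h \cdot A(x) + O(|h|^2)$ yields
$$\Psi_u(x, x+h) - u(x) = h \cdot V(x) + R_u(x, h), \qquad |R_u(x,h)|_p \leq C(u, A)\,\|h\|_K^2\ \text{for } \|h\|_K \leq 1.$$
I would split the double integral at $\|h\|_K = \delta$ for a small $\delta \in (0, 1)$. On $\{\|h\|_K > \delta\}$, compact support of $u$ confines $x$ to a bounded set and the integrand is controlled by $C(u) \rho_n(\|h\|_K)/\|h\|_K^p$; using the polar identity $\int_{\R^N} f(\|h\|_K)\,dh = N|K|\int_0^\infty f(t)\,t^{N-1}\,dt$, this contribution is at most $C(u)\int_\delta^\infty \rho_n(t)\,t^{N-1-p}\,dt$, which vanishes by \eqref{mollifiers-stat}. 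On $\{\|h\|_K \leq \delta\}$, the elementary inequality $\bigl||a+b|_p^p - |a|_p^p\bigr| \leq C(p)(|a|_p^{p-1}|b|_p + |b|_p^p)$ combined with the Taylor estimate reduces matters, up to an $O(\delta)$ error uniform in $n$, to
$$I_n := \iint_{\|h\|_K \leq \delta}\frac{|h \cdot V(x)|_p^p}{\|h\|_K^p}\,\rho_n(\|h\|_K)\,dh\,dx.$$
Polar coordinates $h = r\theta$, $\theta \in S^{N-1}$, followed by $t = r\|\theta\|_K$ give
$$I_n = \Big(\int_0^\delta \rho_n(t)\,t^{N-1}\,dt\Big) \int_{\R^N}\int_{S^{N-1}}\frac{|\theta \cdot V(x)|_p^p}{\|\theta\|_K^{N+p}}\,d\sigma(\theta)\,dx.$$
The key algebraic identity
$$\int_{S^{N-1}}\frac{|\theta \cdot v|_p^p}{\|\theta\|_K^{N+p}}\,d\sigma(\theta) = p\,\|v\|_{Z_p^*K}^p, \qquad v \in \C^N,$$
is obtained by the same polar change of variables inside the defining integral \eqref{Lpmoment}, using $x \in K \iff r \leq 1/\|\theta\|_K$. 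Finally, from $\int_0^1 \rho_n(t)\,t^{N-1}\,dt = 1$ and $t^{-p} \geq 1$ on $(0,1]$ one gets $\int_\delta^1 \rho_n(t)\,t^{N-1}\,dt \leq \int_\delta^1 \rho_n(t)\,t^{N-1-p}\,dt \to 0$, hence $\int_0^\delta \rho_n(t)\,t^{N-1}\,dt \to 1$. This proves the formula for smooth $u$.

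For general $u \in W^{1,p}_{A,K}(\R^N)$ with $p > 1$, approximate $u$ by $u_k \in C_c^\infty$ convergent in $W^{1,p}_{A,K}$. The $\limsup$ direction requires an $n$-uniform Hardy-type bound
$$\iint \frac{|\Psi_u(x,y) - u(x)|_p^p}{\|x-y\|_K^p}\,\rho_n(\|x-y\|_K)\,dx\,dy \leq C\,\|u\|_{W^{1,p}_{A,K}}^p,$$
which follows from the pointwise estimate $|\Psi_u(x,y) - u(x)|_p \leq |u(y)-u(x)|_p + \|A\|_{\mathrm{Lip}}\|y-x\|_K |u(y)|_p$, combined with the non-magnetic anisotropic BBM bound and absorbing the perturbative term via the second condition of \eqref{mollifiers-stat}. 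The $\liminf$ direction follows from Fatou's lemma after slicing in $\theta \in S^{N-1}$ and invoking the one-dimensional magnetic BBM along affine lines. For $p = 1$ and $u \in BV_{A,K}(\R^N)$, regularize $u$ by standard convolution, apply the smooth formula, and pass to the limit via lower semicontinuity of $|Du|_{A,K}$ coming from its dual definition, interpreting the right-hand side through the agreement \eqref{agreem}. The main obstacle is securing the $n$-uniform Hardy estimate in the magnetic anisotropic setting: the phase $e^{\i(x-y)\cdot A((x+y)/2)}$ precludes a direct reduction to the non-magnetic inequality, and one must either exploit Lipschitz continuity of $A$ to cancel the term linear in $\|y-x\|_K$, or carefully absorb it using the second condition of \eqref{mollifiers-stat} with a shifted exponent.
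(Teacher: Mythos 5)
Your computation for smooth compactly supported $u$ (Taylor expansion of $\Psi_u$, splitting at $\|h\|_K=\delta$, polar coordinates with the substitution $t=r\|\theta\|_K$, and the identity $\int_{\mS^{N-1}}|\theta\cdot v|_p^p\,\|\theta\|_K^{-N-p}\,d\sigma=p\|v\|_{Z_p^*K}^p$, which is exactly \eqref{id2}) is correct and coincides with the paper's argument. The genuine gap is in the passage from smooth functions to general $u\in W^{1,p}_{A,K}(\R^N)$, which the paper delegates to \cite[Theorem 2.1]{magn-case} and which you explicitly leave unresolved (``the main obstacle''). Worse, the pointwise bound you propose for the $n$-uniform estimate, $|\Psi_u(x,y)-u(x)|_p\le |u(y)-u(x)|_p+\|A\|_{\mathrm{Lip}}\|y-x\|_K|u(y)|_p$, is false as stated: the phase gives $|e^{\i(x-y)\cdot A(\frac{x+y}{2})}-1|\le |x-y|\,|A(\tfrac{x+y}{2})|$, and a Lipschitz $A$ need not be bounded, so this term cannot be controlled by $\|A\|_{\mathrm{Lip}}$ and cannot be absorbed into the non-magnetic BBM bound. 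The correct route is the one visible in the paper's formula for $\partial_y\Psi_v$: differentiating $t\mapsto \Psi_u(x,x+t(y-x))$, the $A(\tfrac{x+y}{2})$ term combines with $\nabla u$ to produce the covariant gradient $\nabla u-\i A u$, and the leftover is $\tfrac12(y-x)\cdot\nabla A$, i.e.\ an error of size $\|\nabla A\|_{L^\infty}|x-y|^2|u|$. This yields, uniformly in $n$ (using $\int_0^1\rho_n(r)r^{N-1}dr=1$ for $|x-y|\le1$ and $\int_1^\infty\rho_n(r)r^{N-1-p}dr\to0$ for $|x-y|>1$), a bound of the functional by $C\big(\int\|\nabla w-\i Aw\|_{Z_p^*K}^p+(\|\nabla A\|_{L^\infty}^p+1)\|w\|_{L^p}^p\big)$; with that estimate in hand, Minkowski's inequality applied to the $L^p$-type functional (linear in $w\mapsto\Psi_w$) closes the density argument and makes your separate ``$\liminf$ via Fatou and one-dimensional slicing'' step unnecessary (as sketched, that step is also not justified: there is no pointwise convergence in $n$ to which Fatou applies, and the one-dimensional magnetic BBM along lines is essentially the statement being proved).

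The $p=1$, $BV_{A,K}$ part of your sketch has a second, smaller gap: lower semicontinuity of $|D\cdot|_{A,K}$ only gives the lower bound. For the upper bound you must know that the mollifications satisfy $\lim_m|Du_m|_{A,K}=|Du|_{A,K}$ (not just $\liminf_m\ge$), which is the content of the paper's Lemma~\ref{lem-BV-2} and again uses Lipschitz continuity of $A$ through the commutator $\big(A(x)-A(x-y)\big)\cdot\varphi(x-y)\tau_m(y)u(x)$; moreover, since convolution does not commute with the magnetic phase, comparing the functional at $u_m$ with the functional at $u$ requires an additional error estimate of order $1/m$ (as in the paper's proof), interpreted through the agreement \eqref{agreem}. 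Without these two ingredients the limit passage you describe does not go through.
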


\begin{remark} \rm Let $(s_n)$ be a positive sequence converging to 0 and set, for $n \ge 1$,  
$$
\rho_n(r)=\frac{p(1-s_n)}{r^{N+ps_n-p}},\quad r>0. 
$$
Then $(\rho_n)$ satisfy \eqref{mollifiers-stat}. Applying Theorem~\ref{thm3-stat}, one rediscovers the results of M. Ludwig. 
\end{remark}

\begin{remark}\rm 
Theorems \ref{thm2-stat} and \ref{thm3-stat} provide the full solution
of a problem arised by Giuseppe Mingione on September 21th, 2016, 
at the end of the seminar ``{\em Another triumph for De Giorgi's Gamma convergence}'' by Haim Brezis at the conference 
``{\em A Mathematical tribute to Ennio De Giorgi}'', held in Pisa from 19th to 23th September 2016.
	\end{remark}

The above results provide an extension of \cite{bourg,BourNg,Davila,ludwig,ludwig-BV,magn-case,nguyen06,acv-p,ponce,squ-volz} to the anisotropic case. 


\section{Proof of Theorem~\ref{thm2-stat}}

\noindent
Set, for $p \ge 1$, 
$$
I_\delta^K(u):=\int_{\R^N}\int_{\R^N} 
\frac{\delta^p}{\|x-y\|_K^{N+p}}\1_{\{|\Psi_u(x,y)-\Psi_u(x,x)|_p>\delta\}}  \, dx \, dy  ,\quad \mbox{ for } u \in L^1_{\loc}(\R^N).
$$
It is clear that, for $u, v \in W^{1, p}_{A, K}(\R^N)$ and $0 < \eps < 1$, 
\begin{equation}\label{Limit formula-2}
I_\delta^K (u) \le  (1-\varepsilon)^{-p} I_{(1-\varepsilon)\delta}^K (v)+ \varepsilon^{-p}I_{\varepsilon\delta}^K(u-v).
\end{equation}	
Applying  \cite[Theorem 3.1]{magn-case}, we have, for $p>1$ and $u \in W^{1,p}_{A, K}(\R^N)$, 
\begin{equation*}
I_\delta^K (u)
\leq C_{N, p, K}  \left( \int_{\R^N}|\nabla u-\i A(x)u|_p^p\, dx + \big(\|\nabla A\|_{L^\infty(\R^N)}^p + 1 \big) \int_{\R^N} |u|_p^p  \, dx \right),  
	\end{equation*}
for some positive constant $C_{N, p, K}$ depending only on $N$, $p$, and $K$. By the density of $C^1_{c}(\R^N)$ in $W^{1, p}_{A}(\R^N)$, 
	it hence suffices to consider the case $u \in C^1_{c}(\R^N)$ which will be assumed from later on. 
	
	By a change of variables as above, we have 
\begin{multline*}
\int_{\R^N}\int_{\R^N} 
\frac{\delta^p}{\|x-y\|_K^{N+p}}\1_{\{|\Psi_u(x,y)-\Psi_u(x,x)|_p>\delta\}}  \, dx \, dy \\[6pt]
= \int_{\R^N} \int_{\mS^{N-1}} \int_0^\infty \frac{1}{\|\sigma\|_K^{N+p}h^{1+p}}\1_{\{|\Psi_u(x,x + \delta h\sigma)-\Psi_u(x,x)|_p>\delta\}}  \, d h \, d \sigma \, dx. 
\end{multline*}
Using the fact
\begin{equation}\label{limit}
\lim_{\delta \to 0 } \frac{ |\Psi_u(x,x + \delta h \sigma)-\Psi_u(x,x)|_p}{\delta} = |\big( \nabla u - \i A(x) u \big) \cdot \sigma|_p h, 
\end{equation}
as in the proof of \cite[Lemma 3.3]{magn-case}, we obtain 
\begin{equation}\label{thm1-p3}
\lim_{\delta \to 0} \int_{\R^N}\int_{\R^N} 
\frac{\delta^p}{\|x-y\|_K^{N+p}}\1_{\{|\Psi_u(x,y)-\Psi_u(x,x)|_p>\delta\}} =  \frac{1}{p} \int_{\R^N} \int_{\mS^{N-1}} \frac{|(\nabla u - \i A (x) u) \cdot \sigma|^p_p}{\| \sigma\|_K^{N+p}}  \, d \sigma \, dx. 
\end{equation}
Since we have
\begin{equation}
\label{id2}
(N+p) \int_{K} |v \cdot y|^p_p \, dy = 
(N+p) \int_{\mS^{N-1}} \int_0^{1/\| \sigma \|_K}  |v \cdot \sigma|^p_p t^{N-1 + p } \, dt d \sigma =  \int_{\mS^{N-1}} \frac{|v \cdot \sigma|^p_p}{\| \sigma\|_K^{N+p}}  \, d \sigma,
\end{equation}
the assertion follows.

\begin{remark}\label{rem-W11} \rm In the case $u \in W^{1, 1}_A(\R^N)$, by Fatou's lemma, as in \eqref{thm1-p3}, one has, with $p = 1$, 
\begin{equation*}
\lim_{\delta \to 0 } I^K_\delta(u) \ge \mathop{\int_{\R^N} \int_{\mS^{N-1}} \int_0^\infty}_{\{ |(\nabla u(x) - \i A (x) u(x) ) \cdot \sigma|_1 h > 1\}} \frac{1}{\| \sigma\|_K^{N+p} h^{2}} \, d h \, d \sigma \, dx = 
\frac{1}{p} \int_{\R^N} \int_{\mS^{N-1}} \frac{| \big( \nabla u - \i A (x) u \big) \cdot \sigma|_1}{\| \sigma\|_K^{N+1}}  \, d \sigma \, dx. 
\end{equation*}
This implies 
\begin{equation*}
	\lim_{\delta\searrow 0} \int_{\R^N}\int_{\R^N} 
	\frac{\delta}{\|x-y\|_K^{N+1}}\1_{\{|\Psi_u(x,y)-\Psi_u(x,x)|_1>\delta\}}
	\, dxdy  \ge \int_{\R^N} \|\nabla u-\i A(x)u\|_{Z^*_1K}dx. 
\end{equation*}
\end{remark}

\section{Proof of Theorem~\ref{thm3-stat}}
\subsection{Proof of Theorem~\ref{thm3-stat} for $p>1$}
Using \cite[Theorem 2.1]{magn-case} without loss of generality, one might assume that $ u \in C^1_c(\R^N)$. 
Note that 
\begin{multline*}
\iint_{\R^{2N}} \frac{|\Psi_u (x, y) -  \Psi_u(x, x)|_p^p}{\|x - y\|_K^p} \rho_n ( \|x - y\|_K) \, dx \, dy  \\[6pt]
=  \int_{\R^N} \int_{\mS^{N-1}} \int_0^\infty  \frac{|\Psi_u (x, x + h \sigma) -  \Psi_u(x, x)|_p^p}{\|\sigma\|_K^p h^p} \rho_n (\| \sigma\|_K h ) h^{N-1} \, dh \, d \sigma \, dx.  
\end{multline*} 
Using  \eqref{limit}, one then can check that, for $p \ge 1$ and $ u \in C^1_c(\R^N)$, 
\begin{multline*}
\lim_{n \to + \infty} \iint_{\{|x-y|\leq 1\}} \frac{|\Psi_u (x, y) -  \Psi_u(x, x)|_p^p}{\|x - y\|_K^p} \rho_n (\|x - y\|_K) \, dx \, dy \\[5pt] 
=  \int_{\R^N} \int_{\mS^{N-1}} \frac{|(\nabla u - \i A (x) u) \cdot \sigma|_p^p}{\|\sigma \|_K^p} \, d \sigma \, dx  \lim_{n \to + \infty} \int_0^1 \rho_n (\| \sigma\|_K h ) h^{N-1} \, dh.
\end{multline*} 
Furthermore, observe that
\begin{equation*}
\iint_{\{|x-y|>1\}} \frac{|\Psi_u (x, y) -  \Psi_u(x, x)|_p^p}{\|x - y\|_K^p} \rho_n (\|x - y\|_K) \, dx \, dy \leq
C\| u\|_{L^p}^p \int_1^\infty h^{N-1-p} \rho_n(\| \sigma\|_K h) \, dh . 
\end{equation*} 
Therefore, for $p \ge 1$ and $ u \in C^1_c(\R^N)$, on account of \eqref{mollifiers-stat} we obtain
\begin{equation}\label{main-BBM-2}
\lim_{n \to + \infty} \iint_{\R^{2N}} \frac{|\Psi_u (x, y) -  \Psi_u(x, x)|_p^p}{\|x - y\|_K^p} \rho_n (\|x - y\|_K) \, dx \, dy
=  \int_{\R^N} \int_{\mS^{N-1}} \frac{|(\nabla u - \i A (x) u) \cdot \sigma|_p^p}{\|\sigma \|_K^{N+p}} \, d \sigma \, dx. 
\end{equation} 
The conclusion now follows from \eqref{id2}. 

\subsection{Proof of Theorem~\ref{thm3-stat} for $p=1$}
We first present some preliminary results. The first one is the following

\begin{lemma}\label{lem-BV-0} Let $u \in W_{A,K}^{1,1}(\R^N)$. Then
	\begin{equation*}
	|Du |_{A, K}=\int_{\R^N} \|\nabla u - \i A(x) u\|_{Z_1^*K} \, dx.
	\end{equation*}
\end{lemma}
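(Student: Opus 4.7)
My plan is to reduce the identity to a duality statement between the norm $\|\cdot\|_{Z_1^*K}$ on $\C^N$ and its real dual, after separating real and imaginary parts of $\nabla u - \i A u$.

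First I would observe that $|\cdot|_1$ splits additively on real and imaginary parts: $|z|_1 = |\Re z| + |\Im z|$ for every $z \in \C$. Consequently, from \eqref{Lpmoment} with $p=1$, for every $v \in \C^N$,
\begin{equation*}
\|v\|_{Z_1^*K} \;=\; \|\Re v\|_{Z_1^*K} + \|\Im v\|_{Z_1^*K},
\end{equation*}
where on the right the norms are the real counterparts on $\R^N$. A direct expansion also gives
\begin{equation*}
\Re\bigl(\nabla u - \i A u\bigr) = \nabla \Re u + A\, \Im u, \qquad \Im\bigl(\nabla u - \i A u\bigr) = \nabla \Im u - A\, \Re u.
\end{equation*}

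Next, since $u \in W^{1,1}_{A,K}(\R^N)$ has, by definition, $\nabla u \in L^1_{\loc}(\R^N,\C^N)$, classical integration by parts applied to any $\varphi \in C^1_c(\R^N,\R^N)$ gives
\begin{equation*}
\int_{\R^N} \bigl( \Re u\, \mathrm{div}\,\varphi - A\cdot\varphi\, \Im u \bigr)\, dx \;=\; -\int_{\R^N} \varphi\cdot \Re\bigl(\nabla u -\i A u\bigr)\, dx,
\end{equation*}
and analogously the integrand defining $C_{2,A,K,u}$ equals, after integration by parts, $-\varphi \cdot \Im(\nabla u - \i A u)$.

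I would then invoke the standard duality formula: for any $F \in L^1(\R^N,\R^N)$,
\begin{equation*}
\int_{\R^N} \|F\|_{Z_1^*K}\, dx \;=\; \sup \Bigl\{ \int_{\R^N} \varphi \cdot F\, dx \,:\, \varphi \in C^1_c(\R^N,\R^N),\ \|\varphi(x)\|_{Z_1^*K^*} \le 1 \Bigr\}.
\end{equation*}
Applied to $F = \Re(\nabla u - \i A u)$ and then to $F = \Im(\nabla u - \i A u)$ (changing $\varphi \to -\varphi$ to absorb the sign), this yields $C_{1,A,K,u} = \int_{\R^N} \|\Re(\nabla u - \i A u)\|_{Z_1^*K}\, dx$ and $C_{2,A,K,u} = \int_{\R^N} \|\Im(\nabla u - \i A u)\|_{Z_1^*K}\, dx$. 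Summing and using the additive decomposition of the first step gives the claim.

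The only nontrivial technical point is the duality formula above. The inequality $\le$ is immediate from the pointwise bound $\varphi\cdot F \le \|\varphi\|_{Z_1^*K^*}\|F\|_{Z_1^*K}$. For the reverse inequality, one selects a measurable dual vector field $\nu(x)$ with $\nu(x)\cdot F(x) = \|F(x)\|_{Z_1^*K}$ and $\|\nu(x)\|_{Z_1^*K^*} \le 1$, and approximates $\nu$ in $L^1$ by fields of the form $\zeta_R \,\eta_\eps * \nu$ with $\zeta_R$ a cutoff and $\eta_\eps$ a standard mollifier. Since $K$ is a bounded convex body, $\|\cdot\|_{Z_1^*K}$ is equivalent to the Euclidean norm, so mollification is continuous in this norm as well, and the normalization $\|\varphi\|_{Z_1^*K^*}\le 1$ is preserved up to a factor tending to $1$; this is the main (but routine) obstacle.
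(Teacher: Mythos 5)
Your argument is correct and is essentially the paper's own proof: the authors merely note the splitting $\|\nabla u-\i A u\|_{Z_1^*K}=\|\Re(\nabla u-\i A u)\|_{Z_1^*K}+\|\Im(\nabla u-\i A u)\|_{Z_1^*K}$ and state that the identity follows by integration by parts, leaving to the reader exactly the duality and approximation details you supply. Incidentally, your signs $\Re(\nabla u-\i A u)=\nabla\Re u+A\,\Im u$ and $\Im(\nabla u-\i A u)=\nabla\Im u-A\,\Re u$ are the correct ones, matching the definitions of $C_{1,A,K,u}$ and $C_{2,A,K,u}$ (the decomposition displayed in the paper's sketch carries a harmless sign typo).
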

\begin{proof} The proof is quite standard and based on integration by parts after noting that 
$$
\|\nabla u - \i A(x) u\|_{Z_1^*K} = \|\nabla \Re u - A(x) \Im u \|_{Z_1^*K} +   \|\nabla \Im u +  A(x) \Re u \|_{Z_1^*K},
$$
since $A(x) \in \R^N$ for $x \in \R^N$.  The details are left to the reader. 
\end{proof}

\begin{lemma}\label{lem-BV-1} Let $u \in BV_A(\R^N)$ and $(u_n) \subset BV_A(\R^N)$. Assume that 
$$
\lim_{n \to + \infty} u_n =  u \mbox{ in } L^1(\R^N).   
$$
Then 
$$
\liminf_{n \to + \infty} |Du_n |_{A, K} \ge |Du |_{A, K}.  
$$
\end{lemma}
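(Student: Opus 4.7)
The plan is to exploit the duality definition $|Du|_{A,K} = C_{1,A,K,u} + C_{2,A,K,u}$, in which $C_{1,A,K,u}$ and $C_{2,A,K,u}$ are written as suprema over a fixed family of test fields of linear functionals depending on $u$. Since a supremum of lower semicontinuous functionals is lower semicontinuous, it suffices to show that each individual linear functional is continuous along $L^1$-converging sequences.

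First I would fix an admissible test field $\varphi \in C^1_c(\R^N, \R^N)$ with $\|\varphi(x)\|_{Z_1^*K^*} \le 1$ in $\R^N$. Because $\varphi$ is compactly supported and $A$ is locally bounded, both $\mbox{div}\,\varphi$ and $A \cdot \varphi$ lie in $L^\infty(\R^N)$ and have compact support. The hypothesis $u_n \to u$ in $L^1(\R^N)$ then lets me pass to the limit in the linear integrals termwise, yielding
$$\int_{\R^N} \Re u\, \mbox{div}\,\varphi - (A \cdot \varphi)\, \Im u \, dx = \lim_{n \to \infty} \int_{\R^N} \Re u_n\, \mbox{div}\,\varphi - (A \cdot \varphi)\, \Im u_n \, dx \le \liminf_{n \to \infty} C_{1, A, K, u_n},$$
and the analogous inequality with $\Re$ and $\Im$ swapped (and the sign of the magnetic term reversed) for $C_{2,A,K,u_n}$.

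Taking the supremum over all admissible $\varphi$ in each of these two inequalities gives $C_{j,A,K,u} \le \liminf_n C_{j,A,K,u_n}$ for $j = 1, 2$. Adding the two inequalities and using the elementary subadditivity $\liminf_n a_n + \liminf_n b_n \le \liminf_n (a_n + b_n)$ yields
$$|Du|_{A,K} = C_{1,A,K,u} + C_{2,A,K,u} \le \liminf_{n \to \infty} C_{1,A,K,u_n} + \liminf_{n \to \infty} C_{2,A,K,u_n} \le \liminf_{n \to \infty} |Du_n|_{A,K},$$
which is the desired conclusion.

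There is no serious obstacle in this argument; this is the standard lower semicontinuity proof for total variation measures written in duality form. The only point that requires minor care is ensuring that the linear functionals are continuous along $L^1$-converging sequences: this is exactly where the \emph{compact support} of $\varphi$ together with the \emph{local} boundedness of $A$ is used, to guarantee that the coefficient $A \cdot \varphi$ is globally bounded and so the $L^1$-convergence of $u_n$ transfers to convergence of the corresponding integrals.
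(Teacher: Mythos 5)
Your proposal is correct and follows essentially the same route as the paper: the paper's proof simply asserts $\liminf_n C_{j,A,K,u_n} \ge C_{j,A,K,u}$ for $j=1,2$ and concludes, and your argument is exactly the standard filling-in of that assertion (fixing an admissible $\varphi$, using compact support of $\varphi$ and local boundedness of $A$ to pass to the limit in the linear functionals under $L^1$-convergence, taking suprema, and using superadditivity of the $\liminf$). The only cosmetic remark is that the inequality $\liminf_n a_n + \liminf_n b_n \le \liminf_n (a_n + b_n)$ is superadditivity rather than subadditivity, but the inequality you use is the correct one.
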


\begin{proof} One can check that 
$$
\liminf_{n \to + \infty} C_{1, A, K, u_n} \ge C_{1, A, K, u} \quad \mbox{ and } \quad \liminf_{n \to + \infty} C_{2, A, K, u_n} \ge C_{2, A, K, u}. 
$$
The conclusion follows. 
\end{proof}

For $r > 0$, let $B_r$ denote the ball centered at the origin and of radius $r$. We have

\begin{lemma}\label{lem-BV-2} Let $u \in BV_A(\R^N)$ and let  $(\tau_m)$ be a sequence of nonnegative mollifiers with $\supp \tau_m \subset B_{1/m}$ which is normalized by the condition $\int_{\R^N} \tau_m(x) \, dx = 1$. Set $u_m = \tau_m * u.$
Assume that $A$ is Lipschitz.  Then 
$$
\lim_{m \to + \infty} |Du_m |_{A, K} = |Du |_{A, K}.  
$$
\end{lemma}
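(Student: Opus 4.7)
The plan is to prove the two matching inequalities $\liminf_{m\to+\infty} |Du_m|_{A,K} \ge |Du|_{A,K}$ and $\limsup_{m\to+\infty} |Du_m|_{A,K} \le |Du|_{A,K}$ separately. The first is immediate from Lemma~\ref{lem-BV-1}, because $u_m = \tau_m * u$ converges to $u$ in $L^1(\R^N)$ by a standard property of mollifiers. The upper bound, which is the substantive part, I would attack directly through the dual characterizations $C_{1,A,K,\cdot}$ and $C_{2,A,K,\cdot}$.

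Fix a test field $\varphi \in C^1_c(\R^N, \R^N)$ with $\|\varphi(x)\|_{Z_1^*K^*} \le 1$ pointwise. Fubini's theorem transfers the mollifier from $u_m$ onto $\varphi$ and yields
\begin{equation*}
\int_{\R^N} \big(\Re u_m \, \mbox{div}\varphi - A \cdot \varphi \, \Im u_m \big) \, dx = \int_{\R^N} \big( \Re u \, \mbox{div}(\tau_m * \varphi) - A \cdot (\tau_m * \varphi) \, \Im u \big) \, dx + E_m(\varphi),
\end{equation*}
where the remainder is the commutator
\begin{equation*}
E_m(\varphi) := \int_{\R^N} \big[ A \cdot (\tau_m * \varphi) - \tau_m * (A \cdot \varphi) \big] \, \Im u \, dx.
\end{equation*}
The key observation is that $\tau_m * \varphi$ is again an admissible test field: it is smooth and compactly supported, and by Jensen's inequality applied to the convex, positively $1$-homogeneous function $\|\cdot\|_{Z_1^*K^*}$ against the probability measure $\tau_m(y)\,dy$, the pointwise constraint $\|(\tau_m * \varphi)(x)\|_{Z_1^*K^*} \le 1$ is preserved. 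Hence the principal term on the right is bounded above by $C_{1,A,K,u}$, and an identical argument applies to $C_{2,A,K,u_m}$.

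The hard step, and the place where the Lipschitz hypothesis on $A$ is essential, is the uniform control of $E_m(\varphi)$. The commutator admits the pointwise representation
\begin{equation*}
\big(A \cdot (\tau_m * \varphi) - \tau_m * (A \cdot \varphi)\big)(x) = \int_{\R^N} \tau_m(y) \, [A(x) - A(x-y)] \cdot \varphi(x-y) \, dy,
\end{equation*}
so that on $\supp \tau_m \subset B_{1/m}$ the bound $|A(x) - A(x-y)| \le \|\nabla A\|_{L^\infty}/m$, combined with the uniform $L^\infty$ bound on $\varphi$ coming from the norm constraint (via equivalence of norms on $\R^N$), gives $|E_m(\varphi)| \le C \|\nabla A\|_{L^\infty} \|u\|_{L^1}/m$, with $C$ independent of the admissible $\varphi$. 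Taking the supremum over $\varphi$ and then letting $m \to + \infty$ delivers $\limsup_m C_{j,A,K,u_m} \le C_{j,A,K,u}$ for $j=1,2$, and summing produces the required upper bound. The verification of the vanishing of the commutator uniformly in $\varphi$ is the only delicate point; preservation of the norm constraint under convolution and the $L^1$ convergence $u_m \to u$ are routine.
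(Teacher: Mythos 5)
Your argument is correct and follows essentially the same route as the paper's proof: the lower bound via Lemma~\ref{lem-BV-1} and $u_m\to u$ in $L^1$, and the upper bound by testing $C_{j,A,K,u_m}$ with an admissible $\varphi$, moving the mollifier onto $\varphi$ (with the constraint $\|\tau_m*\varphi\|_{Z_1^*K^*}\le 1$ preserved, as in \eqref{lem-BV-2-p3}), and controlling the commutator term by $C\|\nabla A\|_{L^\infty}\|u\|_{L^1}/m$ exactly as in \eqref{lem-BV-2-p4}. Nothing essential is missing.
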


\begin{proof} The proof is quite standard, see  e.g., \cite{gariepy} and also \cite{acv-p}. Let $\varphi \in C^1_{c}(\R^N)$ be such that 
\begin{equation*}
\|\varphi (x)\|_{Z_1^*K^*}  \le 1\,\, \mbox{ in } \R^N. 
\end{equation*}
We have 
\begin{multline}\label{lem-BV-2-p2}
 \int_{\R^N} \Re u_m \mbox{div} \varphi - A \cdot \varphi \Im u_m \, dx  =  \int_{\R^N} \Re u \mbox{div} \varphi_m - A \cdot \varphi_m \Im u \, dx \\[6pt]
+  \int_{\R^N} \int_{\R^N} \big(A(x) - A(x-y) \big) \cdot \varphi(x-y) \tau_m(y) u(x) \, dx \, dy. 
\end{multline}
Since 
$$
\|\varphi_m (x)\|_{Z_1^*K^*} \le \sup_{y} \|\varphi (y)\|_{Z_1^*K^*}   \le 1, 
$$
we have 
\begin{equation}\label{lem-BV-2-p3}
\Big|\int_{\R^N} \Re u \mbox{div} \varphi_m - A \cdot \varphi_m \Im u \, dx\Big| \le C_{1, A, K, u} 
\end{equation}
Since $\supp \tau_m \subset B_{1/m}$, one can check that 
\begin{equation}\label{lem-BV-2-p4}
\Big| \int_{\R^N} \int_{\R^N} \big(A(x) - A(x-y) \big) \cdot \varphi(x-y) \tau_m(y) u(x) \, dx \, dy \Big| \le C \| \nabla A\|_{L^\infty} \| u \|_{L^1}/ m. 
\end{equation}
A combination of \eqref{lem-BV-2-p2}, \eqref{lem-BV-2-p3}, and \eqref{lem-BV-2-p4} yields 
$$
\limsup_{m \to + \infty} C_{1, A, K, u_m} \le C_{1, A, K, u}. 
$$
Similar, we obtain 
$$
\limsup_{m \to + \infty} C_{2, A, K, u_m} \le C_{2, A, K, u}
$$
and the conclusion follows from Lemma~\ref{lem-BV-1}. 
\end{proof}

We are ready to give 

\medskip 
\noindent{\bf Proof of Theorem \ref{thm3-stat} for $p=1$.} Let $(\tau_m)$ be a sequence of nonnegative mollifiers with $\supp \tau_m \subset B_{1/m}$ which is normalized by the condition $\int_{\R^N} \tau_m(x) \, dx = 1$. Set 
$u_m  = u * \tau_m.$
As in the proof of \cite[Lemma 2.4]{magn-case}, we have 
\begin{multline*}
\iint_{\R^{2N}} \frac{|\Psi_{u_m} (x, y) -  \Psi_{u_m}(x, x)|_1}{\|x - y\|_K} \rho_n ( \|x - y\|_K) \, dx \, dy \\[6pt] 
 \le \iint_{\R^{2N}} \frac{|\Psi_u (x, y) -  \Psi_u(x, x)|_1}{\|x - y\|_K} \rho_n ( \|x - y\|_K) \, dx \, dy \\[6pt] + C \int_{\R^{N}}\int_{\R^{N}}\int_{\R^{N}} |z| \tau_m(z)   \rho_n ( \|x - y\|_K) u(y) \, dz \, dx \, dy. 
\end{multline*}
We have 
\begin{multline*}
\lim_{m \to + \infty} \lim_{n \to + \infty} \iint_{\R^{2N}} \frac{|\Psi_{u_m} (x, y) -  \Psi_{u_m}(x, x)|_1}{\|x - y\|_K} \rho_n ( \|x - y\|_K) \, dx \, dy \\[6pt]
\ge \lim_{m \to + \infty} \int_{\R^N} \|\nabla u_m - \i A(x) u_m \|_{Z_1^*K} \, dx \mathop{=}^{\mathrm{Lemma~\ref{lem-BV-2}}}  \int_{\R^N} \|\nabla u - \i A(x) u \|_{Z_1^*K} \, dx
\end{multline*}
and, since $\supp \tau_m \subset B_{1/m}$, 
\begin{equation*}
\int_{\R^N}\int_{\R^N}\int_{\R^N} |z| \tau_m(z) 
 \rho_n ( \|x - y\|_K) u(y) \, dz \, dx \, dy  \le C/ m. 
\end{equation*}
It follows that 
\begin{equation*}
\liminf_{n \to + \infty} \iint_{\R^{2N}} \frac{|\Psi_u (x, y) -  \Psi_u(x, x)|_1}{\|x - y\|_K} \rho_n ( \|x - y\|_K) \, dx \, dy \ge \int_{\R^N} \|\nabla u - \i A(x) u\|_{Z_1^*K} \, dx.
\end{equation*}
We also have, by Fatou's lemma,  
\begin{multline}\label{thm3-1-p00}
\iint_{\R^{2N}} \frac{|\Psi_u (x, y) -  \Psi_u(x, x)|_1}{\|x - y\|_K} \rho_n ( \|x - y\|_K) \, dx \, dy \\[6pt]
\le \liminf_{m \to + \infty}
\iint_{\R^{2N}} \frac{|\Psi_{u_m} (x, y) -  \Psi_{u_m}(x, x)|_1}{\|x - y\|_K} \rho_n ( \|x - y\|_K) \, dx \, dy.  
\end{multline}
We next derive an upper bound for the RHS of \eqref{thm3-1-p00}.
Let $v \in W^{1, 1}_{A,K}(\R^N) \cap C^\infty(\R^N)$. We have 
\begin{multline}\label{thm-3-1-p1}
\iint_{\R^{2N}} \frac{|\Psi_{v} (x, y) -  \Psi_{v}(x, x)|_1}{\|x - y\|_K} \rho_n ( \|x - y\|_K) \, dx \, dy \\[6pt]
= \int_{\R^N} \int_{\mS^{N-1}} \int_0^\infty \frac{|\Psi_{v} (x, x + h \sigma) -  \Psi_{v}(x, x)|_1}{\|h \sigma\|_K} \rho_n ( h \| \sigma \|_K) h^{N-1} \, d h \, d \sigma \, dx. 
\end{multline}
Using the fact
\begin{align*}
\frac{\partial \Psi_v (x, y)}{\partial y}  &= e^{\i (x - y)\cdot A \left( \frac{x + y}{2} \right)} \nabla v(y)- \i \left\{ A\Big( \frac{x + y}{2} \Big) + \frac{1}{2} (y-x) \cdot \nabla A \Big(\frac{x + y}{2} \Big)\right\} \times \\
& \times e^{\i (x - y)\cdot A \left( \frac{x + y}{2} \right)} v(y),  \notag
\end{align*}
and applying the mean value theorem, we obtain 
\begin{align}\label{thm-3-1-p2}
\iint_{\R^{2N}} & \frac{|\Psi_{v} (x, y)  -  \Psi_{v}(x, x)|_1}{\|x - y\|_K}  \rho_n ( \|x - y\|_K) \, dx \, dy \\[6pt]
\le &  \int_{\R^N} \int_{\mS^{N-1}} \int_0^1 \int_0^1 | \big( \nabla v - \i A v \big) \cdot \sigma|_1 (x + t h \sigma) \frac{1}{\|\sigma \|_K} h^{N-1} \rho_n(h \|\sigma\|_K)\, dt \, dh \, d \sigma \, dx  \nonumber \\[6pt]
& + C \int_{\R^N} \int_{\mS^{N-1}} \int_0^1 \int_0^1  |v(x + t h \sigma )| \frac{\|\nabla A \|_{L^\infty} h}{\|\sigma \|_K} h^{N-1} \rho_n(h \|\sigma\|_K)\, dt \, d h \, d \sigma \, dx \nonumber \\[6pt]
& + C \mathop{\int_{\R^N} \int_{\R^N}}_{\{|x - y| > 1\}} \big( |v(x)| + |v(y)| \big) \frac{1}{\|x-y\|_K}\rho_n(\|x-y\|_K) \,d x \, dy.  \nonumber
\end{align}
One can check that
\begin{multline}\label{thm-3-1-p3}
\int_{\R^N} \int_{\mS^{N-1}} \int_0^1 \int_0^1 |\big(\nabla v - \i A v \big) \cdot \sigma|_1 \big(x + t h \sigma \big) \frac{1}{\|\sigma \|_K} h^{N-1} \rho_n(h \|\sigma\|_K)\, dt \, dh \, d \sigma \, dx \\[6pt]
\leq  \int_{\R^N} \int_{\mS^{N-1}} \frac{|(\nabla v - \i A (x) v) \cdot \sigma|_1}{\|\sigma \|_K^{N+1}} \, d \sigma \, dx \int_0^\lambda h^{N-1} \rho_n(h) \, dh,
\end{multline}
and 
\begin{multline}\label{thm-3-1-p4}
\int_{\R^N} \int_{\mS^{N-1}}  \int_0^1 \int_0^1  |v(x + t h \sigma \big)| \frac{\|\nabla A \|_{L^\infty} h}{\|\sigma \|_K} h^{N-1} \rho_n(h \|\sigma\|_K)\, dt  \, d h \, d \sigma \, dx \\[6pt]
\le C_K \|\nabla A \|_{L^\infty} \| v\|_{L^1} \int_0^\lambda h^{N} \rho_n(h) \, dh, 
\end{multline}
where $\lambda = \max\{ \| \sigma\|_K : \sigma \in \mS^{N-1}\}$. 
A combination of \eqref{thm-3-1-p1}, \eqref{thm-3-1-p2}, \eqref{thm-3-1-p3}, and \eqref{thm-3-1-p4} yields
\begin{multline}\label{thm-3-1-p5}
\iint_{\R^{2N}} \frac{|\Psi_{v} (x, y) -  \Psi_{v}(x, x)|_1}{\|x - y\|_K} \rho_n ( \|x - y\|_K) \, dx \, dy 
\le 
\int_0^\lambda h^{N-1} \rho_n(h) \, dh\int_{\R^N} \|\nabla v - \i A(x) v\|_{Z_1^*K} \, dx \\[6pt]  
+ C_K (\|\nabla A \|_{L^\infty} + 1)  \| v\|_{L^1} \Big( \int_0^\lambda h^{N} \rho_n(h) \, dh + \int_1^\infty h^{N-2} \rho_n(h) \, dh \Big). 
\end{multline}
Using Lemma~\ref{lem-BV-2}, we derive from \eqref{thm3-1-p00} and \eqref{thm-3-1-p5} that 
\begin{multline*}
\iint_{\R^{2N}} \frac{|\Psi_u (x, y) -  \Psi_u(x, x)|_1}{\|x - y\|_K} \rho_n ( \|x - y\|_K) \, dx \, dy \\[6pt]
\le \int_0^\lambda h^{N-1} \rho_n(h)\int_{\R^N} \|\nabla u - \i A(x) u\|_{Z_1^*K} \, dx \\[6pt]
+ C_K \|\nabla A \|_{L^\infty} \| u\|_{L^1} \Big( \int_0^\lambda h^{N} \rho_n(h) \, dh + \int_1^\infty h^{N-2} \rho_n(h) \, dh \Big), 
\end{multline*}
which yields, by \eqref{mollifiers-stat}, 
\begin{equation*}
\limsup_{n \to + \infty} \iint_{\R^{2N}} \frac{|\Psi_u (x, y) -  \Psi_u(x, x)|_1}{\|x - y\|_K} \rho_n ( \|x - y\|_K) \, dx \, dy 
\le \int_{\R^N} \|\nabla v - \i A(x) v\|_{Z_1^*K} \, dx.  
\end{equation*}
The proof is complete. \qed

%

\bigskip
\medskip

\bigskip

\end{document}